\newtheorem{theorem}{Theorem}[section]
\newtheorem{lemma}[theorem]{Lemma}
\newtheorem{definition}[theorem]{Definition}
\numberwithin{equation}{section} \theoremstyle{definition}
\def\Der{\operatorname{Der}}
\def\span{\operatorname{span}}
\newcommand{\C}{\ensuremath{\mathbb C}\xspace}
\renewcommand{\a}{\ensuremath{\alpha}}
\renewcommand{\l}{\ensuremath{\lambda}}
\newcommand{\Z}{\ensuremath{\mathbb{Z}}\xspace}
\newcommand{\N}{\ensuremath{\mathbb{N}}\xspace}
\newcommand{\A}{\ensuremath{\mathcal{A}}\xspace}
\newcommand{\W}{\ensuremath{\mathcal{W}}\xspace}
\newcommand{\ad}{\operatorname{ad}\xspace}
\renewcommand{\phi}{\varphi}
\renewcommand{\leq}{\leqslant}
\renewcommand{\geq}{\geqslant}
\def\mh{\mathfrak{h}}
\def\sl{\mathfrak{sl}}
\def\gl{\mathfrak{gl}}
\def\si{\sigma}
\def\l{\lambda}
\def\rad{\text{Rad}}
\def\ad{\text{ad}}
\def\span{\text{span}}
\def\Der{\text{Der}}
\begin{document}
\title[Irreducible weight modules]{Classification of irreducible bounded weight modules over the derivation Lie algebras of quantum tori}
\author{Genqiang Liu and Kaiming Zhao}
\date{Jun, 2015.}
\maketitle

\begin{abstract}
Let $d>1$ be an integer, $q=(q_{ij})_{d\times d}$ be a $d\times d$ complex matrix
satisfying $q_{_{ii}}=1, q_{_{ij}}=q_{_{ji}}^{-1}$ with all $q_{ij}$ being roots of unity.  Let $\C_q$ be the
rational quantum torus algebra associated to $q$, and  $\Der(\C_q)$ its
derivation Lie algebra. In this paper, we give a complete classification of irreducible  bounded weight modules
over $\Der(\C_q)$. They turn out to be  irreducible sub-quotients of  $\Der(\C_q)$-module $\mathcal{V}^\alpha(V,W)$ for a
finite dimensional irreducible $\gl_d$-module $V$, a finite
dimensional $\Gamma$-graded-irreducible
 $\gl_N$-module $W$, and $\alpha\in \mathbb{C}^d$.

\end{abstract}

\vskip 10pt \noindent {\em Keywords:}  Rational
quantum tori; derivation algebra; weight module; irreducible module.

\vskip 5pt
\noindent
{\em 2000  Math. Subj. Class.:}
17B10, 17B20, 17B65, 17B66, 17B68

\vskip 10pt

\section{Introduction}

Let  $d>1$ be an integer, $q=(q_{ij})_{d\times d}$ be a $d\times d$ complex matrix
satisfying $q_{_{ii}}=1, q_{_{ij}}=q_{_{ji}}^{-1}$ with all $q_{ij}$
being roots of unity.  In the present paper, we consider the
rational quantum torus algebra $\C_q$ associated to $q$, and its
derivation algebra $\Der(\C_q)$. The algebra $\C_q$  is an important
algebra, since it is the coordinate algebra of a large class of
extended affine Lie algebras (See \cite{BGK}) and shows
 up in the theory of noncommutative geometry (See \cite{BVF}).
When all $q_{ij}=1$, the algebra $\Der(\C_q)$ is the classical
Witt algebra $\W_d$, i.e., the derivation algebra of the
Laurent polynomial algebra $\A=\C[x_1^{\pm1},x_2^{\pm1}, . . .,
x_d^{\pm1}]$, see \cite{RSS}, which is also known as the Lie algebra of vector fields on a $d$-dimensional torus.

The representation theory of Witt algebras was
studied by many mathematicians and physicists for the last couple of decades, see \cite{B, E1, E2, GLZ,L3, L4, L5,
MZ,Z1,Z2}. In 1986,  Shen defined a class of modules $F^\alpha_b(V)=V\otimes \A$
over the Witt algebra $\mathcal{W}_d$ for  $\a\in\C^d$
and an irreducible module $ V$ over the general linear Lie algebra
$\gl_d$ on which the identity matrix acts as multiplication by a  complex number $b$, see \cite{Sh}, which were also given by Larsson in 1992,
see \cite{L3}. In 1996, Eswara Rao \cite{E1} determined necessary and
sufficient conditions for   these modules to be irreducible when $V$
is finite dimensional, see   \cite{GZ} for a simplified proof. When $V$ is infinite dimensional, $F^\alpha_b(V)$ is always
irreducible, see \cite{LZ2}.

 Very recently Billig and Futorny
\cite{BF2} gave a complete classification of all irreducible weight modules over $\W_d$ with
finite dimensional weight spaces. Based on \cite[Theorem 3.1]{MZ} they actually showed that any irreducible
bounded  weight modules over $\W_d$ is isomorphic to some irreducible subquotient  of $F^\alpha_b(V)$.
To achieve this result, they introduced a   powerful technique: any bounded
weight $\W_d$-module $M$ is a $\W_d$-quotient module of
an $\A\W_d$-module $\widehat{M}$, a module both for the Lie algebra $\W_d$ and the associative algebra $\A$ with two
structures being compatible. Here $\widehat{M}$ is called the $\A$-cover of $M$, which
is in fact an $\A\W_d$-quotient module of the $\A\W_d$-module $\W_d\otimes M$.
 Thus they reduced the classification of irreducible bounded $\W_d$-modules to the classification
of irreducible bounded $\A\W_d$-modules. Using the  classification
of irreducible bounded $\A\W_d$-modules in \cite{E2,B}, they classified  all irreducible bounded weight modules over $\W_d$.

Lin and Tan defined in \cite{LT} a class of uniformly bounded irreducible
weight modules over $\Der(\C_q)$, which generalized the construction given
by Shen. These modules were clearly characterized in \cite{LZ3}. But these modules can not exhaust all simple bounded weight modules over $\Der(\C_q)$,
 since
 a bigger class of simple modules $\mathcal{V}^\alpha(V,W)$ were  constructed in \cite{LZ1}, which further generalized Shen's modules.   See (2.5).
 Moreover we showed in \cite{LZ1} that
any irreducible $Z\mathcal{D}$-weight module (similar to the notion of $\A\W_d$-modules, see Definition 2.2)
with finite dimensional weight spaces is isomorphic to some  $\mathcal{V}^\alpha(V,W)$ for a
finite dimensional irreducible $\gl_d$-module $V$, a finite
dimensional $\Gamma$-graded-irreducible
 $\gl_N$-module $W$, and $\alpha\in \mathbb{C}^d$, where $Z:=Z(\C_q)$ is the center of
$\C_q$ and $\mathcal{D}:=\Der(\C_q)$.

In the present paper, we consider  irreducible bounded weight $\Der(\C_q)$-modules. For an irreducible bounded weight $\Der(\C_q)$-module $M$, we construct a $Z\mathcal{D}$-module $\widehat{M}$ which is called the $Z\mathcal{D}$-cover of $M$.
  The ideal of the $Z\mathcal{D}$-cover stems from \cite{BF2}. Here the $Z\mathcal{D}$-cover $\widehat{M}$ is different
   from the $\A\W_d$-cover $\W_d\otimes M$ in \cite{BF2}, since $\W_d\otimes M$
  is no longer a $Z\mathcal{D}$-module in our case. Now we define the $Z\mathcal{D}$-cover $\widehat{M}$ as
  a $Z\mathcal{D}$-quotient module of $\C_q'\otimes M$, see Definition 3.4.
  Using this technique, we prove that any irreducible bounded $\Der(\C_q)$-weight module
  is isomorphic to some irreducible sub-quotient of  $\mathcal{V}^\alpha(V,W)$ for a
finite dimensional irreducible $\gl_d$-module $V$, a finite
dimensional $\Gamma$-graded-irreducible
 $\gl_N$-module $W$, and $\alpha\in \mathbb{C}^d$. See Theorem 2.5.

Throughout this paper we denote by $\mathbb{Z}$, $\mathbb{Z}_+$, $\mathbb{N}$,
$\mathbb{Q}$ and $\mathbb{C}$ the sets of  all integers, nonnegative
integers, positive integers, rational numbers and complex numbers,
respectively. We use $E_{ij}$ to denote the matrix with a $1$
in the $(i, j)$ position and zeros elsewhere.

\section{Notation and the main result}

In this section we will collect notation and related results, then state our main theorem.

We fix a positive integer $d>1$. Denote  vector space  of $d\times 1$ matrices by $\mathbb{C}^d$.
Denote its standard basis by $\{e_1,e_2,...,e_d\}$. Let
$(\,\cdot\,|\, \cdot\, )$ be the standard symmetric bilinear form
such that $(u | v)=u^Tv\in\mathbb{C}$, where $u^T$ is the matrix
transpose of $u$.

 Let $q=(q_{_{ij}})_{i,j=1}^d$ be a $d\times d$ matrix
over $\C$ satisfying $q_{_{ii}}=1, q_{_{ij}}=q_{_{ji}}^{-1},$ where
$q_{ij}$ are roots of unity for all $1\leq i,j\leq d$. We will call
such a matrix $q$ {\it rational}.
\begin{definition}The rational quantum torus
$\mathbb{C}_q$ is the unital associative algebra over
$\C$ generated by $t_1^{\pm 1}, \ldots, t_d^{\pm 1}$ and subject to
the defining relations $t_it_j=q_{ij}t_jt_i,$
$t_it_i^{-1}=t_i^{-1}t_i=1$ for all $1\leq i,j\leq d$.
\end{definition}
For convenience, denote $t^n=t_1^{n_1}t_2^{n_2}\cdots t_d^{n_d}$ for any
$n=(n_1,\cdots, n_d)^T\in\mathbb{Z}^{d}$.
For any $n,m\in\mathbb{Z}^{d}$, we define the functions $\si(n,m)$ and $f(n,m)$ by $$t^n
t^m=\si(n,m)t^{n+m}, \,\,t^n t^m=f(n,m)t^m t^n.$$ It is well-known
that
$$\si(n,m)=\prod_{1\le i<j\le d}q_{ji}^{n_j
m_i},\,\,\,f(n, m)=\prod_{i,j=1}^dq_{ji}^{n_j m_i},$$ and
$f(n,m)=\si(n,m)\si(m,n)^{-1}$, see \cite{BGK}. We also define $$\rad(f)=\{n\in
\mathbb{Z}^{d}\,|\,f(n,\mathbb{Z}^{d})=1\},\,\,\Gamma=\mathbb{Z}^{d}/\rad(f)
.$$
Clearly, the center $Z(\C_q)$ of $\C_q$ is spanned by $t^r$ for $r\in\rad(f)$.

From the results in \cite{N},
 up to an isomorphism of  $\C_q$,  we
may assume that $q_{2i,2i-1}=q_i, q_{2i-1,2i}=q_i^{-1},$ for $1\leq
i\leq z$,  and other  entries of $q$ are all $1$, where
$z\in\mathbb{N}$ with $2z\leq d$ and  with the orders $k_i$  of
$q_i,1\leq i\leq z$ as roots of unity satisfy  $k_{i+1}|k_{i}, 1\leq
i< z$. For an integer $l\in\{1,\dots,d\}$, let \begin{equation}\label{2.1}
  \xi_l=
  \begin{cases}
   k_ie_{2i-1},& {\text {if} }\ l=2i-1\leq 2z,\\
k_ie_{2i},& {\text {if} }\ l=2i\leq 2z,\\
e_{l},& {\text {if} }\ l>2z.\\
\end{cases}\end{equation}
Then $\{\xi_1,\dots,\xi_d\}$ is a $\Z$-basis of the subgroup $\rad(f)$.

Throughout the present  paper, we assume that   $q$ is of the above simple form. Then we see that
$\sigma(r,n)=\sigma(n,r)=1$ (i.e.,
$t^nt^r=t^{n+r}$) for all $r\in\rad(f)$ and $n\in\Z^d$.
In this case, we know that $$\Gamma =\oplus_{i=1}^z \left
(\Z/(k_i\Z))\oplus (\Z/(k_i\Z))\right).$$

%Since $\C_q$ is an associative algebra, it is naturally a  Lie algebra defined by the
%usual brackets. For each  $n\in\mathbb{Z}^{d}$, we denote its image of  in $\Gamma$
%by ${\bar n}$. Then $\mathbb{C}_q$ is a $\Gamma$-graded
%Lie algebra with the decomposition $\mathbb{C}_q=\bigoplus_{{\bar
%n}\in\Gamma}(\mathbb{C}_q)_{{\bar n}}$, where $(\mathbb{C}_q)_{{\bar
%n}}=\sum\limits_{r\in\rad(f)}\mathbb{C}t^{n+r}$.

Let $\Der(\mathbb{C}_q)$ be the derivation Lie algebra of $\C_q$. Let $\Der(\mathbb{C}_q)_n$
be the set of homogeneous elements of $\Der(\mathbb{C}_q)$
with degree $n\in\Z^d$. Then from Lemma 2.48 in [BGK], we have
$$\Der(\mathbb{C}_q)=\bigoplus_{n\in\mathbb{Z}^d}\Der(\mathbb{C}_q)_n,\,\,
  \Der(\mathbb{C}_q)_n=
  \begin{cases}
   \mathbb{C}{\ad}(t^n),& {\text {if} }\ n\not\in {\rad(f)},\\
\bigoplus_{i=1}^d\mathbb{C}t^n\partial_i,& {\text {if} }\ n\in
   {\rad}(f),
\end{cases}$$
where $\partial_i$ is the degree derivation defined by $\partial_i(t^n)=n_it^n$ for any $n\in\mathbb{Z}^d$. We
will simply denote ad$(t^n)$ in $\Der(\mathbb{C}_q)$ by $t^n$ for $n\not\in
{\rad(f)}$.

For $n\in\rad(f),u \in\mathbb{C}^d$, we denote
$D(u,n)=t^n\sum_{i=1}^du_i\partial_i$. The Lie bracket of $\Der(\mathbb{C}_q)$ is given
by:
\begin{itemize}
\item[(1)]$[t^s, t^{s'}]=(\sigma(s, s')-\sigma(s', s))t^{s+s'}$;
\item[(2)]$[D(u, r), t^s]=(u | s) t^{r+s}$;
\item[(3)]$[D(u, r), D(u', r')]=D(w, r+r')$,
\end{itemize}
where $w =(u | r')u'-(u' \,|\, r)u$, $s, s'\in \mathbb{Z}^d\setminus \rad(f)$, $r,r'\in\rad(f)$, and we have used that $\sigma(r, s)=\sigma(r,r')=1$.

We can  see that $\mh:=\span\{D(u, 0)\mid u\in\mathbb{C}^d\}$
is the Cartan  subalgebra (the maximal toral subalgebra) of $\Der(\C_q)$.  Moreover the subalgebra of $\Der(\C_q)$
spanned by $\{ t^s\,|\,s\in\Z^d\backslash \rad(f)\}$ is isomorphic to the derived algebra $\C_q':=[\mathbb{C}_q,\C_q]$ of $\C_q$.
 Let
$$\W_d=\span\{D(u,r)\mid r\in\rad(f), u\in\mathbb{C}^d\}$$ which is indeed isomorphic to the
classical Witt algebra. Note that the algebra $\Der(\C_q)$ has a nature structure of  $Z(\C_q)$-module, i.e.,
$$t^r\cdot t^s=t^{s+r},\ \ t^r\cdot D(u,r')=D(u,r+r'),$$ where
$r,r'\in\rad(f), s\in\Z^d\setminus\rad(f), u\in \C^d$.
 %And
%$Z(\C_q)$ is a $\Der(\C_q)$-module on which $\Der(\C_q)$ act as derivations.

 A $\Der(\C_q)$-module $V$ is called a {\em weight} module provided
that the action of $\mh$ on $V$ is diagonalizable.  For
any weight module $V$ we have the weight space decomposition
\begin{equation}
V=\bigoplus_{\lambda\in \mh^*}V_{\lambda},
\end{equation}
where
$\mh^*=\mathrm{Hom}_{\mathbb{C}}(\mh,\mathbb{C})$
and
\begin{displaymath}
V_{\lambda}=\{v\in V\mid \partial v=\lambda(\partial)v \text{ for all
}\partial\in \mh\}.
\end{displaymath}
The space $V_{\lambda}$ is called the {\em weight space}
corresponding to the {\em weight} $\lambda$.  If there is an integer $k\in\N$ such that $\dim_{\mathbb{C}}V_{\lambda}<k$
for all $\lambda\in
\mh^*$, the weight module $V$ is called a {\em bounded} weight module. The following notion is important to our later arguments.

%If $V$ is an irreducible weight module over $\mg$, then the central
%element $t^0$ acts on $V$ as a scalar. In this case a weight
%$\lambda$ of $V$ can be determined by
%$(\lambda(\partial_1),...,\lambda(\partial_d))$. Then we consider
%weights as elements in $\mathbb{C}^d$. For example $\mathbb{Z}^d$ is
%the weight set of the adjoint representation of $\mg$ (i.e.
%$\a(\partial_i)=\a_i$ for all $\alpha \in \mathbb{Z}^d$).
%Furthermore, if $V$ is a weight $\mathfrak{g}$-module, then for all
%$\alpha\in \mathbb{Z}^d$ and $\lambda\in \mathfrak{g}_0^*$ we have
%$\mathfrak{g}_{\alpha}V_{\lambda}\subset V_{\lambda+\alpha}$. From
%this it follows that if $V$ is an indecomposable weight module  (in
%particular, simple), then $\mathrm{supp}(V)\subset
%\lambda+\mathbb{Z}^d$ for some $\lambda\in \mathfrak{g}_0^*$.

\begin{definition}A  $Z\mathcal{D}$-module $V$ is  a  module both
for the Lie algebra $\Der(\C_q)$ and the commutative associative algebra $Z(\C_q)$, with these two structures
being compatible:
\begin{equation}[D(u,r),t^{r'}]v=D(u,r)t^{r'}v-t^{r'}D(u,r)v,
\end{equation}
\begin{equation}t^s t^rv=t^rt^sv,
\end{equation}for any $r,r'\in \rad(f), s\not\in\rad(f), v\in V$.
\end{definition}
Clearly $\C_q'$ is a $Z\mathcal{D}$-module under the adjoint action of $\Der(\C_q)$ and  the action of $Z(\C_q)$ defined as follows:
$$ t^r t^n=t^{n+r}, \ \ r\in\rad(f), \ \ n\not\in\rad(f).$$

In \cite{LZ1}, a class of $Z\mathcal{D}$-modules was constructed. Next, we will recall these modules.
First, we recall the twisted loop algebra realization of $\C_q$.

Let $\mathcal{I}=\span\{t^{n+r}-t^n\,|\,n\in \mathbb{Z}^d, r\in\rad
(f)\}$ which is an ideal of the associative algebra $\mathbb{C}_q$.
Then from \cite{N} and \cite{Z2} we know that
$$\mathbb{C}_q/\mathcal{I}\simeq\otimes_{i=1}^z\gl_{k_i}\simeq \gl_N$$
as associative algebras with $N=\prod\limits_{i=1}^z k_i$. It is
well known that $\gl_{k_i}, 1\leq i\leq z$, as the associative
algebra $M_{k_i}(\C)$, is generated by $X_{2i-1}, X_{2i}$ with
$$X_{2i-1}=E_{1,1}+q_iE_{2,2}+\cdots+q_i^{k_i-1}E_{k_i,k_i},$$ $$X_{2i}=E_{1,2}+E_{2,3}+\cdots
+E_{k_i-1,k_i}+E_{k_i,1},$$ which satisfy
$X_{2i}^{k_i}=X_{2i-1}^{k_i}=1, X_{2i}X_{2i-1}=q_iX_{2i-1}X_{2i}$.
We denote $\otimes_{i=1}^z X_{2i-1}^{n_{2i-1}}X_{2i}^{n_{2i}}$ by
$X^n$ for each $n\in\mathbb{Z}^d$. Identifying  $\gl_N$ with
$\bigotimes\limits_{i=1}^z\gl_{k_i}$ as  associative algebras,
$\gl_N$ is spanned by $X^n, n\in\mathbb{Z}^{d}$ and $X^r$ equals to
the identity matrix $E$ in $\gl_N$ for each $r\in\rad(f)$.

\begin{lemma} \emph{(See \cite{ABFP})} As associative algebras, $$\mathbb{C}_q\cong\bigoplus\limits_{n\in \mathbb{Z}^d}
(\mathbb{C}X^n\otimes x^n),$$ where  the right hand side is a
$\mathbb{Z}^d$-graded subalgebra of
$\gl_N\otimes \A$.
\end{lemma}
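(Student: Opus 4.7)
The plan is to construct an explicit map $\phi : \C_q \to \gl_N \otimes \A$ by setting $\phi(t^n) = X^n \otimes x^n$ for $n \in \Z^d$ and extending $\C$-linearly, then verify it is an injective algebra homomorphism whose image is the stated $\Z^d$-graded subspace.

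First I would check that $\phi$ respects the defining relations of $\C_q$ on generators: because $\A$ is commutative, the relation $\phi(t_i)\phi(t_j) = q_{ij}\phi(t_j)\phi(t_i)$ reduces to $X_i X_j = q_{ij} X_j X_i$ in $\gl_N$, which is immediate from $X_{2l} X_{2l-1} = q_l X_{2l-1} X_{2l}$ within the $l$-th tensor factor together with commutativity between generators from distinct tensor factors (matching $q_{ij}=1$ for those indices). The main computational step is then to establish the cocycle identity
\begin{equation*}
X^n X^m = \sigma(n,m)\, X^{n+m} \quad \text{in } \gl_N.
\end{equation*}
This is proved inside each tensor factor $\gl_{k_l}$ by iteratively using $X_{2l} X_{2l-1} = q_l X_{2l-1} X_{2l}$ to move $X_{2l-1}^{m_{2l-1}}$ past $X_{2l}^{n_{2l}}$, which contributes $q_l^{n_{2l} m_{2l-1}}$, and the overall product matches $\sigma(n,m) = \prod_{l=1}^{z} q_l^{n_{2l} m_{2l-1}}$ because in the normal form all other $q_{ji}$ with $i<j$ are $1$. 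Combined with commutativity of $\A$, this yields $(X^n \otimes x^n)(X^m \otimes x^m) = \sigma(n,m)\,X^{n+m} \otimes x^{n+m}$, simultaneously showing that $\phi$ is multiplicative (matching $t^n t^m = \sigma(n,m) t^{n+m}$ in $\C_q$) and that the right-hand side of the claim is closed under multiplication, hence a $\Z^d$-graded subalgebra of $\gl_N \otimes \A$.

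Injectivity of $\phi$ is then easy: the monomials $\{x^n : n \in \Z^d\}$ form a $\C$-basis of $\A$ and each $X^n$ is a nonzero element of $\gl_N$ (a product of invertible matrices), so $\{X^n \otimes x^n\}_{n \in \Z^d}$ is $\C$-linearly independent, and $\phi$ sends the basis $\{t^n\}_{n \in \Z^d}$ of $\C_q$ to this family. Surjectivity onto the stated image is by construction, and the $\Z^d$-grading on both sides is preserved. The main obstacle is the bookkeeping in the cocycle identity: one must track which pairs of generators contribute which power of which $q_l$, and handle the asymmetry that $X^{n+r} = X^n$ for $r \in \rad(f)$ in the $\gl_N$-factor while the $\A$-factor retains the full $\Z^d$-grading — this asymmetry is precisely what makes the image a proper graded subalgebra of $\gl_N \otimes \A$ rather than the full tensor product.
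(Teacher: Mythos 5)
Your proof is correct, but it is worth noting that the paper does not actually prove this lemma at all: it is stated with a bare citation to [ABFP], where it follows from the general machinery of realizing $\Z^d$-graded (twisted) forms of algebras as loop algebras. Your argument is the elementary, self-contained alternative: define $\phi(t^n)=X^n\otimes x^n$ on the monomial basis, verify the cocycle identity $X^nX^m=\sigma(n,m)X^{n+m}$ factor by factor in $\gl_N=\bigotimes_{l}\gl_{k_l}$ (which, in the normal form of $q$, reduces to a single commutation $X_{2l}^{n_{2l}}X_{2l-1}^{m_{2l-1}}=q_l^{n_{2l}m_{2l-1}}X_{2l-1}^{m_{2l-1}}X_{2l}^{n_{2l}}$ per factor, matching $\sigma(n,m)=\prod_l q_l^{n_{2l}m_{2l-1}}$), and conclude injectivity from linear independence of the $x^n$ together with $X^n\neq 0$. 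This checks out, including your observation that $X^{n+r}=X^n$ for $r\in\rad(f)$ is what makes the image a proper twisted-loop subalgebra of $\gl_N\otimes\A$. What the citation buys the authors is independence from the normal form and a uniform treatment of all rational $q$; what your computation buys is transparency and an explicit isomorphism, at the cost of leaning on the specific normalization of $q$ (already fixed in the paper) and of some bookkeeping you partly gloss over — e.g.\ the generators $t_l$ with $l>2z$, for which there is no tensor factor and $\phi(t_l)=E\otimes x_l$ is central in the $\gl_N$ component, should be mentioned explicitly when checking the defining relations, though nothing goes wrong there. I see no gap.
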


 Clearly, $\gl_N$ is a
$\Gamma$-graded Lie algebra with the gradation
$$\gl_N=\bigoplus_{{\bar n}\in\Gamma}(\gl_N)_{\bar n},$$ where
$(\gl_N)_{\bar n}=\C X^n$.

A module $W$ over the Lie algebra $\gl_N$ is called a
$\Gamma$-graded $\gl_N$-module if $W$ has a subspace decomposition
$W=\bigoplus_{{\bar n}\in\Gamma}W_{\bar n}$ such that
$(\gl_N)_{\bar m}W_{\bar n}\subset W_{\bar m+\bar n}$ for all
$m,n\in \Z^d$. A $\Gamma$-graded $\gl_N$-module  $W$ is  $\Gamma$-graded-irreducible if
it has no nonzero proper  $\Gamma$-graded submodules.
We remark that all finite dimensional  $\Gamma$-graded $\gl_N$-modules were classified in \cite{EK}.

For any irreducible finite dimensional $\gl_d$-module $V$, any $\Gamma$-graded-irreducible $\gl_N$-module
$W=\bigoplus_{{\bar n}\in\Gamma}W_{\bar n}$ with identity action of identity
matrix $E$ in $\gl_N$, and any  $\alpha\in
\mathbb{C}^d$, let
\begin{equation}\mathcal{V}^\alpha(V,W)=\bigoplus_{n\in\Z^d}(V\otimes W_{\bar n}\otimes t^n).\end{equation}
 Then $\mathcal{V}^\alpha(V,W)$ becomes a $Z\mathcal{D}$-module
if we define the following actions
\begin{itemize}
  \item[(1)] $t^s(v\otimes w_{\bar n}\otimes t^n)=v\otimes (X^sw_{\bar n})\otimes t^{n+s}$;
  \item[(2)] $D(u,r)(v\otimes w_{\bar n}\otimes t^n)=\Big((u\,|\,n+\alpha)v+ (ru^T)v\Big)\otimes w_{\bar n}
\otimes t^{n+r}$,
\end{itemize}
where $u\in\mathbb{C}^d$, $v\in V$, $w_{\bar n}\in W_{\bar n}$ and $
r\in \rad(f), s\in  \mathbb{Z}^d, X^s\in \gl_N$.

In \cite{LZ1}, all irreducible $Z\mathcal{D}$-modules with finite dimensional
weight spaces are proved to be  of the form $\mathcal{V}^\alpha(V,W)$.
Restricted on $\Der(\C_q)$, $\mathcal{V}^\alpha(V,W)$ is not necessarily irreducible.
The following result easily follows from \cite{E1} and \cite{GZ}, which gives all irreducible subquotients of the $\Der(\C_q)$-module $\mathcal{V}^\alpha(V,W)$.

\begin{lemma} The $\Der(\C_q)$-module $\mathcal{V}^\alpha(V,W)$ is reducible
if and only if $\dim W=1$ and one of the following holds
\itemize\item[(a).]
  the highest weight of $V$ is the fundament weight
$\omega_k$ of $\sl_d$ and $b=k$, where $k\in\Z$ with $1\leq k\leq
d-1$;
\item[(b).] $\dim V=1$, $\a\in\Z^d$ and $b\in\{0, d\}$.\enditemize\end{lemma}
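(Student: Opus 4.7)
My plan is to split according to $\dim W$ and reduce each case to Shen's module over the classical Witt algebra.

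\emph{If $\dim W=1$}, $\Gamma$-graded-irreducibility concentrates $W$ in a single $\Gamma$-degree, which after shifting $\alpha$ we may take to be $\bar 0$. Then $X^s$ annihilates $W$ for $s\notin\rad(f)$, while $X^r=E$ acts as the identity for $r\in\rad(f)$. Hence the operators $t^s$ with $s\notin\rad(f)$ act as zero on $\mathcal V^\alpha(V,W)$, and the only surviving graded pieces are $V\otimes W\otimes t^r$ with $r\in\rad(f)$. Identifying $\rad(f)\cong\Z^d$ via the $\Z$-basis $\{\xi_l\}$ from \eqref{2.1}, the residual $\W_d\subset\Der(\C_q)$-action coincides with Shen's module $F^\alpha_b(V)$ over the classical Witt algebra, where $b$ is the scalar by which the identity matrix $I_d$ acts on $V$. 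The reducibility criterion of Eswara Rao \cite{E1}, reproved in \cite{GZ}, then yields precisely the alternatives (a) and (b).

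\emph{If $\dim W>1$}, I will show $\mathcal V^\alpha(V,W)$ is irreducible. Observe first that $(\gl_N)_{\bar 0}=\C E$, so $\Gamma$-graded-irreducibility forces $W$ to have at least two nonzero graded components, and the $X^s$-translates of any nonzero $w\in W$ span all of $W$. Let $0\neq M\subset\mathcal V^\alpha(V,W)$ be a submodule. As each weight space is $V\otimes W_{\bar n}\otimes t^n$ for a unique $n$, $M$ contains a nonzero element at some $n_0$. By varying $u$ with $(u\,|\,n_0+\alpha)=0$ inside $D(u,r)$ I isolate the rank-one matrix operator $ru^T$ acting on $V$; since $\{ru^T:r\in\rad(f),\,u\in\C^d\}$ spans $\gl_d$ and $V$ is $\gl_d$-irreducible, this lets me enlarge $M$ to contain $V\otimes\C w_0\otimes t^{n_0}$ for some $w_0\in W_{\bar{n_0}}\setminus\{0\}$. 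Applying $t^s$ for $s\notin\rad(f)$ then produces elements $v\otimes X^sw_0\otimes t^{n_0+s}\in M$, and graded-irreducibility of $W$, combined with further $D(u,r)$-actions for $r\in\rad(f)$, forces $M=\mathcal V^\alpha(V,W)$.

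The delicate technical step will be the disentanglement of the scalar term $(u\,|\,n+\alpha)v$ from the rank-one matrix term $ru^Tv$ inside $D(u,r)$, and the handling of resonant $n$ at which a natural choice of $u$ gives $(u\,|\,n+\alpha)=0$; these resonances are always avoidable by varying $u$ on a hyperplane, since $d\geq 2$. Once a single weight space lies in $M$, the combined action of $t^s$ ($s\notin\rad(f)$) and $D(u,r)$ ($r\in\rad(f)$), together with the $\Gamma$-graded-irreducibility of $W$, transmits this to every other weight space, completing the proof.
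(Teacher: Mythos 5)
The paper gives no written proof of this lemma (it is asserted to follow from \cite{E1} and \cite{GZ}), so the value of your proposal lies in what you actually supply. Your case $\dim W=1$ is exactly the intended reduction: a one-dimensional $\Gamma$-graded-irreducible $W$ is concentrated in a single degree, so $X^s$ kills it for $s\notin\rad(f)$, the subalgebra $\C_q'$ acts by zero, and the module collapses to a Shen module $F^{\alpha'}_b(V)$ over the copy of $\W_d$ attached to the lattice $\rad(f)$; the criterion of \cite{E1}, \cite{GZ} then yields (a) and (b). Two small points you should make explicit there: the basis change $\xi_i\mapsto e_i$ identifying $\rad(f)$ with $\Z^d$ is diagonal, so it twists $V$ only by an inner automorphism (preserving the highest weight and $b$); and the condition ``$\a\in\Z^d$'' in (b) must be tracked through this identification, since the lattice governing the residual Witt algebra is $\rad(f)$, not $\Z^d$.

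The genuine gap is in the case $\dim W>1$. Restricting $u$ to the hyperplane $(u\mid n_0+\alpha)=0$ gives you only the operators $ru^T$ with $u\perp(n_0+\alpha)$, and these span $\{A\in\gl_d:\,A(n_0+\alpha)=0\}$, a proper $d(d-1)$-dimensional subalgebra of $\gl_d$; an irreducible $\gl_d$-module need not be irreducible, nor even cyclic on a prescribed vector, under this subalgebra. So ``varying $u$ on a hyperplane'' does not by itself enlarge $M$ to contain $V\otimes\C w_0\otimes t^{n_0}$; you would have to combine the actions based at several different $n$ (where the hyperplanes differ), and that computation --- the step you defer --- is precisely what is missing. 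There is a cleaner route for this half that avoids all weight-by-weight analysis: $\dim W>1$ forces $N>1$, hence for every $r\in\rad(f)$ one can write $r=s+(r-s)$ with $s,\,r-s\notin\rad(f)$, and on $\mathcal{V}^\alpha(V,W)$ the composition $t^s\circ t^{r-s}$ equals $\sigma(s,r-s)$ times the $Z(\C_q)$-action of $t^r$, because $X^sX^{r-s}=\sigma(s,r-s)X^r=\sigma(s,r-s)E$. Therefore every $\Der(\C_q)$-submodule is automatically a $Z\mathcal{D}$-submodule, and irreducibility follows at once from the $Z\mathcal{D}$-irreducibility of $\mathcal{V}^\alpha(V,W)$ established in \cite{LZ1}.
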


We can easily see that when the $\Der(\C_q)$-module $\mathcal{V}^\alpha(V,W)$ is reducible it has a unique nonzero proper submodule.

In the present paper,  we will reduce the classification irreducible uniformly bounded modules over $\Der(\C_q)$ to the
classification of irreducible $Z\mathcal{D}$-modules, that is,  we will obtain the following main result.

\begin{theorem}\label{t4.4} Let $d>1$ be an integer, $q=(q_{ij})_{d\times d}$ be a $d\times d$ complex matrix which is rational.
Let $M$ be an irreducible   bounded  weight $\Der(\C_q)$-module. Then there exist a
finite dimensional irreducible $\gl_d$-module $V$, a finite
dimensional $\Gamma$-graded-irreducible
 $\gl_N$-module $W$, and $\alpha\in \mathbb{C}^d$ such that
$M$ is isomorphic to some irreducible sub-quotient of  $\mathcal{V}^\alpha(V,W)$.
\end{theorem}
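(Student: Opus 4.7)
The proof will adapt the $\A$-cover technique of Billig--Futorny \cite{BF2} to the quantum torus setting, using the derived subalgebra $\C_q'$ in place of the commutative polynomial algebra, and then reduce to the already established classification of irreducible $Z\mathcal{D}$-modules with finite-dimensional weight spaces from \cite{LZ1}.

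First, I would observe that the tensor product $\C_q'\otimes M$ carries a natural $Z\mathcal{D}$-module structure: $\Der(\C_q)$ acts diagonally (via the adjoint action on $\C_q'$ and the given action on $M$) and $Z(\C_q)$ acts by left multiplication on the first tensor factor. The compatibility axioms (2.3) and (2.4) of Definition~2.2 are routine to verify using $\sigma(r,s)=\sigma(s,r)=1$ for $r\in\rad(f)$ together with the commutator rule $[D(u,r),t^s]=(u\,|\,s)t^{r+s}$. There is a canonical surjective $\Der(\C_q)$-module homomorphism $\pi:\C_q'\otimes M\to M$ sending $t^n\otimes m$ to $\ad(t^n)m$. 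The $Z\mathcal{D}$-cover $\widehat{M}$ is then defined, as announced in Definition~3.4, as an appropriate $Z\mathcal{D}$-quotient of $\C_q'\otimes M$ that still $\Der(\C_q)$-surjects onto $M$ but is ``minimal'' as a $Z\mathcal{D}$-module with this property.

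The next step is to extract from $\widehat{M}$ the structural features needed to apply \cite{LZ1}. Since $M$ is a bounded weight module and $\C_q'$ is $\mathbb{Z}^d$-graded with at most one-dimensional homogeneous components, every weight space of $\widehat{M}$ is finite-dimensional, and in fact $\widehat{M}$ is itself a bounded weight $Z\mathcal{D}$-module. The main obstacle lies one level deeper: one must show that the $Z\mathcal{D}$-cover is substantial enough, in the sense that there exists an irreducible $Z\mathcal{D}$-subquotient $L$ of $\widehat{M}$ whose induced image under $\pi$ is nonzero. Equivalently, one needs a non-degeneracy statement guaranteeing that the relations imposed in passing from $\C_q'\otimes M$ to $\widehat{M}$ do not annihilate everything that projects onto $M$. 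This is the analogue, in the quantum torus setting, of the Shapovalov-type non-triviality theorem for the $\A$-cover in \cite{BF2}, and its proof will require careful bookkeeping with the twisted cocycle $\sigma$ together with the block decomposition $\C_q\cong\bigoplus_{n\in\mathbb{Z}^d}(\C X^n\otimes x^n)$ of Lemma~2.3.

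Granting this non-degeneracy, the theorem follows quickly. Choose an irreducible $Z\mathcal{D}$-subquotient $L$ of $\widehat{M}$ whose induced image in $M$ is nonzero; irreducibility of $M$ as a $\Der(\C_q)$-module forces this image to equal $M$, so $M$ appears as a $\Der(\C_q)$-subquotient of $L$. By the classification in \cite{LZ1}, $L\cong\mathcal{V}^\alpha(V,W)$ for some $\alpha\in\mathbb{C}^d$, some irreducible $\gl_d$-module $V$ and some $\Gamma$-graded-irreducible $\gl_N$-module $W$, with $V$ and $W$ necessarily finite-dimensional since the weight spaces of $\widehat{M}$ (hence of $L$) are bounded. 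Consequently $M$ is realized as an irreducible $\Der(\C_q)$-sub-quotient of $\mathcal{V}^\alpha(V,W)$, as required.
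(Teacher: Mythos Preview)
Your overall architecture matches the paper's, but you have misidentified where the real work lies, and this creates a genuine gap.

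You assert that ``since $M$ is a bounded weight module and $\C_q'$ is $\mathbb{Z}^d$-graded with at most one-dimensional homogeneous components, every weight space of $\widehat{M}$ is finite-dimensional, and in fact $\widehat{M}$ is itself a bounded weight $Z\mathcal{D}$-module.'' This is false for $\C_q'\otimes M$ and not at all obvious for $\widehat{M}$. The weight space $(\C_q'\otimes M)_\lambda$ decomposes as $\bigoplus_{n\in\Z^d\setminus\rad(f)} \C t^n\otimes M_{\lambda-n}$, an \emph{infinite} direct sum of bounded pieces, hence infinite-dimensional. Showing that, after passing to the quotient $\widehat{M}=(\C_q'\otimes M)/J$, the weight spaces become finite-dimensional is exactly the main technical theorem of the paper (Theorem~3.6). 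Its proof requires Billig--Futorny's differentiator operators $\Omega_r^{(l,h)}=\sum_{i=0}^l(-1)^i\binom{l}{i}e_{r-ih}e_{ih}$ from the solenoidal subalgebra $W_u$, a lemma (Lemma~3.5) that these annihilate $M$ for large $l$, and an induction on a norm $\|r\|$ on $\rad(f)$ to rewrite every $\psi(t^{n+r},v)$ in terms of finitely many generators with small $\|r\|$. None of this is routine, and the cocycle $\sigma$ and the block decomposition of Lemma~2.3 play no role here.

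Conversely, what you flag as the ``main obstacle'' --- the existence of an irreducible $Z\mathcal{D}$-subquotient $L$ of $\widehat{M}$ mapping onto $M$ --- is essentially automatic once boundedness is known: bounded weight spaces together with $[\Z^d:\rad(f)]<\infty$ force $\widehat{M}$ to have a finite $Z\mathcal{D}$-composition series, and since $\pi$ is surjective one simply takes the smallest $k$ with $\pi(\widehat{M}_k)\neq 0$. No Shapovalov-type argument is needed. Finally, you omit the case $\C_q'M=0$, which must be handled separately by reduction to the $\W_d$ result of \cite{BF2}.
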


\section{Proof of Theorem 2.5}

In this section we will prove Theorem 2.5.

Let $M$ be an irreducible bounded weight $\Der(\C_q)$-module. The irreducibility of $M$ implies that there is an $\a\in\C^d$ such that
$M=\oplus_{n\in\Z^d}M_{\alpha+n}$, where $$M_{\alpha+n}=\{v\in M \mid \partial_i(v)=(\a_i+n_i)v,\ 1\leq i\leq d\}.$$
In \cite{BF2}, in order to define  the $\A\W_d$-cover of $M$,
they considered the the tensor product $\W_d\otimes M$ of the adjoint module and $M$. In our case, the module $\W_d\otimes M$ is still an $\A\W_d$-module, unfortunately is no longer a $Z\mathcal{D}$-module.
Now we  turn to  the tensor product  $\C_q' \otimes M$ of the $\Der(\C_q)$-modules $\C_q'$ and $M$,
since $\C_q'$ itself is a $Z\mathcal{D}$-module.

\begin{lemma} The space $\C_q' \otimes M$ is a $Z\mathcal{D}$ module if we define the action of $Z(\C_q)$ by
\begin{equation}\label{3.1} t^r(t^n\otimes w)=t^{n+r}\otimes w,\end{equation}
where $r\in\rad(f), n\not\in\rad(f), w\in M$.

\end{lemma}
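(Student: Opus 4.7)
The plan is to invoke the general principle that whenever $A$ is a $Z\mathcal{D}$-module and $B$ is a $\Der(\C_q)$-module, the tensor product $A \otimes B$ acquires a $Z\mathcal{D}$-module structure by letting $\Der(\C_q)$ act diagonally and letting $Z(\C_q)$ act on the $A$-factor alone. Applying this with $A = \C_q'$ (which the excerpt has already exhibited as a $Z\mathcal{D}$-module under $t^r \cdot t^n = t^{n+r}$) and $B = M$ immediately yields the lemma, since the induced $Z(\C_q)$-action on $\C_q' \otimes M$ is exactly (3.1).

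To establish the general principle, the diagonal $\Der(\C_q)$-action on $A \otimes B$ is standard, and the $Z(\C_q)$-action $t^r(a \otimes b) := (t^r a) \otimes b$ is well-defined as a module action directly from the corresponding structure on $A$. For compatibility (3) of Definition 2.2, one expands
\[
D(u,r)\bigl(t^{r'}(a \otimes b)\bigr) - t^{r'}\bigl(D(u,r)(a \otimes b)\bigr) = \bigl(D(u,r) t^{r'} a - t^{r'} D(u,r) a\bigr) \otimes b,
\]
the $t^{r'} a \otimes D(u,r) b$ contributions arising from $D(u,r)$ acting on $B$ having cancelled. Since $A$ itself satisfies (3), the right-hand side equals $\bigl((u|r') t^{r+r'} a\bigr) \otimes b = (u|r') t^{r+r'}(a \otimes b)$, matching the left side of (3) with $v = a \otimes b$. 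For compatibility (4), a parallel calculation shows that both $t^s t^r(a \otimes b)$ and $t^r t^s(a \otimes b)$ expand to $(t^s t^r a) \otimes b + (t^r a) \otimes (t^s b)$, using (4) for $A$ to equate $t^s t^r a = t^r t^s a$.

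The main thing to keep track of is the dual role of $t^s$: for $s \notin \rad(f)$ it denotes an element of $\Der(\C_q)$ acting on both tensor factors, whereas for $s \in \rad(f)$ it denotes an element of $Z(\C_q)$ acting only on the first factor. The bracket $[D(u,r), t^{r'}]$ in (3) must correspondingly be interpreted as the derivation value $(u|r') t^{r+r'} \in Z(\C_q)$ (this lies in $Z(\C_q)$ because $r+r' \in \rad(f)$). Because $\sigma(r,\cdot) = \sigma(\cdot,r) = 1$ for $r \in \rad(f)$, these two interpretations never collide, and no genuine obstacle arises: the verification is purely formal once the distinction is settled.
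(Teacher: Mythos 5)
Your proof is correct and is essentially the paper's argument in abstract form: the paper verifies the two compatibility identities of Definition 2.2 directly on elements $t^n\otimes w$ (using the cancellation of the $\;t^{n+r}\otimes D(u,m)w$ terms and the fact that $\sigma(r,s)=\sigma(s,r)=1$), and your ``general principle'' for $A\otimes B$ is the same computation with $t^n$ replaced by a generic element $a$ of a $Z\mathcal{D}$-module $A$. The only difference is that you offload the $\sigma(r,s)=1$ step into the hypothesis that $\C_q'$ is itself a $Z\mathcal{D}$-module, which the paper has already asserted, so no gap results.
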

\begin{proof}
For any $u\in\C^d, m,r\in\rad(f), n, s\not\in\rad(f)$ and $w\in M$,  we have that
$$\aligned
 D(u,m)t^r&(t^n\otimes w)-t^rD(u,m)(t^n\otimes w)\\
=&\ (u\mid r+n)t^{n+m+r}\otimes w+t^{n+r}\otimes D(u,m)w\\
&\ -(u\mid n)t^{n+m+r}\otimes w-t^{n+r}\otimes D(u,m)w \\
=&(u\mid r)t^{n+m+r}\otimes w=[D(u,m),t^r](t^n\otimes w),
\endaligned $$
and
$$\aligned   t^st^r(t^n\otimes w)
=&\ (\sigma(s,n+r)-\sigma(n+r,s))t^{n+s+r}\otimes w+t^{n+r}\otimes t^s w\\
=&\ (\sigma(s,n)-\sigma(n,s))t^{n+s+r}\otimes w+t^{n+r}\otimes t^s w\\
=&\ t^rt^s(t^n\otimes w).
\endaligned $$
In the second equality, we have used the fact that
$\sigma(r,s)=\sigma(s,r)=1$.

So the action of  $\Der(\C_q)$ and $\Z(\C_q)$ is compatible, hence $\C_q'\otimes M$ is a $Z\mathcal{D}$ module.
\end{proof}

Define the linear map
$$\pi:\C_q'\otimes M \rightarrow M$$
 by $\pi(y\otimes w)=yw$ for $y\in \C_q', w\in M$.
\begin{lemma} The map $ \pi$ is a $\Der(\C_q)$-module homomorphism. When $\C_q' M\neq 0$, $\pi$ is surjective.
\end{lemma}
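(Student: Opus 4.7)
The plan is to check both claims by direct computation, exploiting the fact that the $\Der(\C_q)$-action on $\C_q'\otimes M$ is simply the tensor product of the adjoint action on $\C_q'$ (viewed as a submodule of the adjoint $\Der(\C_q)$-module) and the given action on $M$.

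First I would verify the small but essential preliminary that $\C_q'$ is indeed stable under $\ad\,\Der(\C_q)$. For $D=D(u,r)$ with $r\in\rad(f)$ and $y=t^s$ with $s\notin\rad(f)$, the bracket rule (2) gives $[D(u,r),t^s]=(u\mid s)t^{r+s}$, which lies in $\C_q'$ because $r+s\notin\rad(f)$. For $D=t^{s'}$ with $s'\notin\rad(f)$, the bracket rule (1) gives $[t^{s'},t^s]=(\sigma(s',s)-\sigma(s,s'))t^{s+s'}$; when $s+s'\notin\rad(f)$ this is in $\C_q'$, and when $s+s'\in\rad(f)$ the element $\ad(t^{s+s'})$ is already zero in $\Der(\C_q)$ (since $t^{s+s'}$ is central in $\C_q$), so the bracket is $0\in\C_q'$. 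This confirms that the standard tensor product $\Der(\C_q)$-module structure on $\C_q'\otimes M$, namely $D\cdot(y\otimes w)=[D,y]\otimes w+y\otimes Dw$, is well defined.

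Given that, the homomorphism property is an immediate calculation: for any $D\in\Der(\C_q)$, $y\in\C_q'$ and $w\in M$,
\begin{equation*}
\pi(D\cdot(y\otimes w))=\pi([D,y]\otimes w+y\otimes Dw)=[D,y]w+yDw=Dyw=D\pi(y\otimes w),
\end{equation*}
using only that $M$ is a module over the Lie algebra $\Der(\C_q)$. So $\pi$ is a $\Der(\C_q)$-module homomorphism.

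For surjectivity, I would argue that $\C_q' M$ is a $\Der(\C_q)$-submodule of $M$. Indeed, for $D\in\Der(\C_q)$, $y\in\C_q'$, $w\in M$, the identity $D(yw)=[D,y]w+y(Dw)$ together with $[D,y]\in\C_q'$ (shown above) puts $D(yw)$ in $\C_q' M$. Hence $\C_q' M$ is a $\Der(\C_q)$-submodule of $M$, and when it is nonzero the irreducibility of $M$ forces $\C_q' M=M$, i.e.\ $\pi$ is surjective. The only potentially delicate point in the whole argument is the closure $[\Der(\C_q),\C_q']\subset\C_q'$, which as explained above relies on the fact that elements $\ad(t^n)$ with $n\in\rad(f)$ vanish in $\Der(\C_q)$; everything else is routine.
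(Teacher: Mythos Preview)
Your proof is correct and follows essentially the same approach as the paper: both use the identity $D(yw)=[D,y]w+y(Dw)$ to verify that $\pi$ intertwines the $\Der(\C_q)$-actions, and both deduce surjectivity from the fact that $\C_q'M$ is a submodule of the irreducible module $M$. Your version is in fact a bit more careful than the paper's, since you check the homomorphism property for all $D\in\Der(\C_q)$ at once (and explicitly verify the closure $[\Der(\C_q),\C_q']\subset\C_q'$, including the case $s+s'\in\rad(f)$), whereas the paper only writes out the computation for $D=D(u,r)$.
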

\begin{proof} For all $n\not\in\rad(f), r\in\rad(f), w\in M$, we have that $$\aligned
\pi(D(u,r)(t^n\otimes w))
=&\ [D(u,r),t^n]w+t^nD(u,r)w\\
=&\ D(u,r)t^nw=D(u,r)\pi(t^n\otimes w).
\endaligned $$
So $\pi$ is a $\Der(\C_q)$-module homomorphism. It is easy to see that
$\C_q' M$ is a submodule of $M$. Then the irreducibility of $M$ implies that $\pi$ is surjective.
\end{proof}

Let $J$ be the subspace of $\C_q'\otimes M$ spanned by the set
$$\{ \sum_{n\in I} t^n \otimes v_n \mid n\not\in \rad(f), v_n\in M, \sum_{n\in I} t^{n+\gamma}v_n=0, \text{for all}\ \gamma\in\rad(f) \}.$$
Clearly, $J\subset \ker(\pi)$.

\begin{lemma} The subspace $J$ is a $Z\mathcal{D}$-submodule of $\C_q'\otimes M$.
\end{lemma}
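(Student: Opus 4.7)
The plan is to verify closure of $J$ under each of the generating actions of $Z\mathcal{D}$: the central scalars $t^{r}$ with $r\in\rad(f)$, the derivations $D(u,r)$ with $r\in\rad(f)$, and the elements $t^{s}$ with $s\not\in\rad(f)$. The overall strategy is: take a typical generator $\xi=\sum_{n\in I} t^{n}\otimes v_{n}\in J$, apply an element of $Z\mathcal{D}$, and rewrite the result as a tensor $\sum_{m} t^{m}\otimes w_{m}$ with $m\not\in\rad(f)$; then derive the identity $\sum_{m}t^{m+\gamma}w_{m}=0$ (for arbitrary $\gamma\in\rad(f)$) by applying the same operator to the defining relation $\sum_{n}t^{n+\gamma'}v_{n}=0$ in $M$ for a suitably chosen $\gamma'$, and using the compatibility/normalization identities $\sigma(s,\gamma)=\sigma(\gamma,s)=1$ for $\gamma\in\rad(f)$.

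The case of $t^{r}$ with $r\in\rad(f)$ is essentially immediate: by (\ref{3.1}) the action shifts indices by $r$, and the $J$-condition at $\gamma$ becomes the original condition at $\gamma+r\in\rad(f)$. For $D(u,r)$ with $r\in\rad(f)$, I expand $D(u,r)\xi$ via the tensor action into $\sum_{n}(u|n)t^{n+r}\otimes v_{n}+\sum_{n}t^{n}\otimes D(u,r)v_{n}$; applying $D(u,r)$ to $\sum_{n}t^{n+\gamma}v_{n}=0$ in $M$ produces $\sum_{n}(u|n+\gamma)t^{n+r+\gamma}v_{n}+\sum_{n}t^{n+\gamma}D(u,r)v_{n}=0$, and the $(u|\gamma)\sum t^{n+r+\gamma}v_{n}$ piece drops by the hypothesis on $\xi$ (with $r+\gamma\in\rad(f)$), giving exactly the required identity.

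The main obstacle lies in handling $t^{s}$ with $s\not\in\rad(f)$, because the bracket $[t^{s},t^{n}]$ in $\Der(\C_{q})$ equals $(\sigma(s,n)-\sigma(n,s))t^{s+n}$ when $s+n\not\in\rad(f)$, but collapses to $0$ when $s+n\in\rad(f)$ (as $\ad(t^{s+n})=0$ for $t^{s+n}$ central). I therefore split $t^{s}\xi$ into $A=\sum_{n:\,s+n\notin\rad(f)}(\sigma(s,n)-\sigma(n,s))\,t^{s+n}\otimes v_{n}$ and $B=\sum_{n}t^{n}\otimes t^{s}v_{n}$, each with indices outside $\rad(f)$. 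Then, applying the operator $t^{s}\in\Der(\C_{q})$ to the identity $\sum_{n}t^{n+\gamma}v_{n}=0$ in $M$ and using $\sigma(s,n+\gamma)=\sigma(s,n)$, $\sigma(n+\gamma,s)=\sigma(n,s)$, together with vanishing of the bracket in the $s+n\in\rad(f)$ case, I get precisely $\sum_{n:\,s+n\notin\rad(f)}(\sigma(s,n)-\sigma(n,s))t^{s+n+\gamma}v_{n}+\sum_{n}t^{n+\gamma}t^{s}v_{n}=0$, which is the required $J$-membership condition for $A+B$.

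Combining the three verifications shows $J$ is stable under both the $Z(\C_{q})$ and the $\Der(\C_{q})$ actions, hence is a $Z\mathcal{D}$-submodule of $\C_{q}'\otimes M$. The delicate point to highlight in writing up is the consistent use of the normalizations $\sigma(\cdot,\gamma)=\sigma(\gamma,\cdot)=1$ for $\gamma\in\rad(f)$ (to pull $\gamma$'s in and out of $\sigma$'s) and the dichotomy $s+n\in\rad(f)$ vs.\ $s+n\not\in\rad(f)$ in the third step, which must match exactly between the tensor-module computation and the $M$-side computation.
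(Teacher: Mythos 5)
Your proof is correct and follows essentially the same route as the paper's: a direct term-by-term verification of the $J$-condition for $t^{r'}$, $D(u,r')$ and $t^{s}$, obtained by applying the same operator to the defining relation $\sum_{n}t^{n+\gamma}v_{n}=0$ in $M$ and using $\sigma(\gamma,\cdot)=\sigma(\cdot,\gamma)=1$ for $\gamma\in\rad(f)$. The only cosmetic difference is your explicit dichotomy $s+n\in\rad(f)$ versus $s+n\notin\rad(f)$, which the paper leaves implicit because the coefficient $\sigma(s,n)-\sigma(n,s)$ vanishes automatically when $s+n\in\rad(f)$.
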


\begin{proof} Let $\eta=\sum_{n\in I} t^n \otimes v_n\in J$, where $I\subset \Z^d\setminus\rad(f)$ is a finite subset.
Then $$\sum_{n\in I} t^{n+r}v_n=0, \text{ for all } r\in\rad(f).$$
To show that $J$ is a $Z\mathcal{D}$-submodule, we only need to show that $$t^{r'}\eta,\ D(u,r')\eta,\ t^s\eta\in J,
\text{ for any } r'\in \rad(f),\  s\not\in\rad(f).$$ From  $\sum_{n\in I} t^{n+r+r'}v_n=0$, we see that
$$\aligned\sum_{n} (u\mid &n)  t^{n+r'+r}v_n+\sum_{n\in I} t^{n+r}D(u,r')v_n\\
=& \sum_{n\in I} (u\mid n)t^{n+r'+r}v_n+\sum_{n\in I} [t^{n+r},D(u,r')]v_n+D(u,r')\sum_{n\in I} t^{n+r}v_n\\
=& \sum_{n\in I} (u\mid n)t^{n+r'+r}v_n-\sum_{n\in I} (u\mid n+r)t^{n+r'+r}v_n\\
=& -(u\mid r)\sum_{n\in I} t^{n+r'+r}v_n=0.
\endaligned $$
Note that  $$t^{r'}\eta=\sum_{n\in I} t^{n+r'}\otimes v_n,$$
 $$D(u,r')\eta=\sum_{n\in I} (u\mid n)t^{n+r'}\otimes v_n+\sum_{n\in I} t^{n}\otimes D(u,r')v_n.$$
So $t^{r'}\eta, D(u,r')\eta\in J$.

 From
$$\aligned \sum_{n\in I} (\sigma(s,n)& -\sigma(n,s))t^{n+s+r}v_n+\sum_{n\in I} t^{n+r}t^sv_n\\
=& \sum_{n\in I} [t^s, t^{n+r}]v_n+\sum_{n\in I} [t^{n+r},t^s]v_n+t^s\sum_{n} t^{n+r}v_n\\
=& \ 0,
\endaligned $$ and $$t^s\eta =\sum_{n\in I} (\sigma(s,n)-\sigma(n,s))t^{n+s}\otimes v_n+\sum_{n\in I} t^{n}\otimes t^sv_n,$$
we see that $t^s\eta\in J$. So $J$ is a $Z\mathcal{D}$-submodule.
\end{proof}

\begin{definition} The $Z\mathcal{D}$-module $\widehat{M}:=\left(\C_q'\otimes M\right)/ J$ is called the $Z\mathcal{D}$-cover of $M$.
\end{definition}

Since $J\subset \ker(\pi)$, $\pi$ induces an epimorphism from $\widehat{M}$ to $M$ which is stilled
denoted by $\pi$. For $t^n\otimes v\in \C_q'\otimes M$, denote the its image in $\widehat{M}$ by
$\psi(t^n, v)$. The next key step is to show that $\widehat{M}$ is a bounded weight module. We will use the
{solenoidal} Lie algebra (or called the centerless higher rank Virasoro algebra) as an auxiliary instrument.

Recall from \cite{BF2} that a vector $u\in\C^d$ is generic if $(u|r)\neq 0$ for any $r\in\Z^d\backslash\{0\}$.
 For a generic   vector $u\in\C^d$, let $$e_r=D(u,r)\text{ for }r\in\rad(f).$$ The subalgebra $W_u$ of
$\Der(\C_q)$ spanned by $e_r$, $r\in\rad(f)$ is  a {solenoidal} Lie algebra.

From now on, we fix a  generic   vector $u\in\C^d$. It is easy to see that the Lie bracket of $W_u$ is given by
\begin{equation}\label{3.2} [e_r,e_{r'}]=(u\mid r'-r)e_{r+r'}\ \ \ r,r'\in\rad(f).\end{equation}
For $r,h\in\rad(f), l\geq 0$, we recall the differentiators in the universal enveloping algebra of  $\Der(\C_q)$:
$$\Omega_{r}^{(l,h)}:=\sum_{i=0}^l(-1)^i{l\choose i}e_{r-ih}e_{ih}.$$
These operators were introduced in \cite{BF2}.
\begin{lemma} \label{3.7}Let $M$ be an irreducible bounded $\Der(\C_q)$-module.
Then there exists an integer $l>1$ such that for all $r,h\in\rad(f)$,
the differentiator $\Omega_{r}^{(l,h)}$ annihilates $M$.
\end{lemma}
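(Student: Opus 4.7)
The plan is to reduce the lemma to the analogous vanishing result for the classical Witt algebra $\W_d \subseteq \Der(\C_q)$ established in \cite{BF2}. The key point is that each differentiator $\Omega_r^{(l,h)}$ lies in the universal enveloping algebra of $W_u$, and hence of $\W_d$; so it suffices to analyze $M$ purely as a $\W_d$-module and then invoke the corresponding vanishing theorem for bounded weight modules over $\W_d$.

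First, I would decompose $M$ as a $\W_d$-module. Since every homogeneous element of $\W_d$ shifts $\mh$-weights by an element of $\rad(f)$, we have $M = \bigoplus_{\bar n \in \Gamma} M^{(\bar n)}$, where $M^{(\bar n)} := \bigoplus_{m \in \rad(f)} M_{\alpha + n + m}$. This is a direct sum of $\W_d$-submodules indexed by the finite group $\Gamma$, and each $M^{(\bar n)}$ is itself a bounded weight $\W_d$-module with weight multiplicities bounded by the same constant $k$ as $M$.

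Second, I would invoke the key technical lemma of \cite{BF2}: for any bounded weight $\W_d$-module $M'$ with weight multiplicities bounded by $k$, there exists $l \in \N$ (depending only on $k$) such that $\Omega_r^{(l,h)} M' = 0$ for all $r, h \in \rad(f)$. The heart of that lemma is the observation that, for fixed $r,h$ and a fixed weight $\mu$, the family $i \mapsto e_{r-ih}\, e_{ih}|_{M'_\mu}$ is a polynomial function of $i \in \Z$ whose degree is controlled by $k$; since $\Omega_r^{(l,h)}$ is, up to a sign, the $l$-th finite difference of this family, it vanishes once $l$ exceeds that degree. Applying this to each of the finitely many summands $M^{(\bar n)}$ and taking the maximum of the resulting integers yields a single $l$ that works for all of $M$.

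The main obstacle is entirely contained in the cited lemma from \cite{BF2}. Its proof exploits the classification of simple bounded weight $\W_d$-modules as subquotients of the Shen modules $F^\alpha_b(V)$, on which the polynomial nature of the matrix entries of $D(u,r)$ in $r$ is transparent from the explicit tensor-module formulas; a spreading argument then lifts this polynomial identity from irreducible subquotients to an arbitrary bounded weight module. In our setting, once this input is in hand, no further work involving the twist $\sigma$ or the finite grading group $\Gamma$ is required, because all differentiators already live inside the untwisted subalgebra $\W_d$.
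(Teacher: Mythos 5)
Your proof is correct and follows essentially the same route as the paper: decompose $M$ into the finitely many bounded submodules indexed by the cosets of $\rad(f)$ in $\Z^d$, invoke the Billig--Futorny differentiator-annihilation theorem on each piece, and take the maximum of the resulting bounds $l$. The only cosmetic difference is that the paper works with the solenoidal subalgebra $W_u$ and cites Proposition 4.6 of \cite{BF1} directly, whereas you route through $\W_d$ and the corresponding lemma of \cite{BF2}, which is itself deduced from that solenoidal result.
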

\begin{proof}
For any $n\in\Z^d$, the subspace $M(n):=\oplus_{r\in\rad(f)}M_{\a+n+r}$ is a bounded module over $W_u$.
Clearly $M(m)=M(n)$ for all $m,n\in \Z^d$ with $m-n\in\rad(f)$. For an $M(n)$, by Proposition 4.6 in \cite{BF1}, there exists  $K\in \N$ such that for all $r,h\in\rad(f)$ and $l>K$,
the differentiator $\Omega_{r}^{(l,h)}$ annihilates $M(n)$. Since the index of the subgroup $\rad(f)$ in $\Z^d$ is finite,
$M$ is a sum of a finite number of  $M(n)$.
 Thus there exists a large enough $l$ such that for all $r,h\in\rad(f)$,
the differentiator $\Omega_{r}^{(l,h)}$ annihilates $M$.
\end{proof}

\begin{theorem}Let $M$ be an irreducible bounded $\Der(\C_q)$-module such that $\C_q' M\neq 0$. Then
the $Z\mathcal{D}$-cover of $\widehat{M}$ is bounded.
\end{theorem}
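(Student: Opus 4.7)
The plan is to adapt the Billig--Futorny differentiator technique of \cite{BF2} to the $Z\mathcal{D}$-setting. Since $\C_q' \otimes M$ is $\Z^d$-graded via the gradings on both factors, and the spanning set defining $J$ consists of homogeneous elements, $J$ is graded. Hence $\widehat{M}$ is an $\mh$-weight module with weight support contained in $\alpha + \Z^d$, and $\widehat{M}_{\alpha+n}$ is spanned by the images $\psi(t^s, v)$ with $s \notin \rad(f)$ and $v \in M_{\alpha+n-s}$.

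The central step is to show that the differentiators of a fixed order $l$ annihilate $\widehat{M}$. Choose, via Lemma \ref{3.7}, an integer $l \geq 2$ with $\Omega_r^{(l,h)} M = 0$ for every $r, h \in \rad(f)$. For $m \notin \rad(f)$ and $v \in M$, expand
$$\Omega_r^{(l,h)}(t^m \otimes v) = \sum_{i=0}^l (-1)^i \binom{l}{i}\, e_{r-ih} e_{ih}(t^m \otimes v)$$
using the Leibniz rule $e_g(t^m \otimes v) = (u|m)\, t^{m+g}\otimes v + t^m \otimes e_g v$ (which follows from relation (2) of the Lie bracket). This produces four summands. The ``$t^m$-only'' term has coefficient $(u|m)\sum_i(-1)^i\binom{l}{i}(u|m+ih)$; since $i \mapsto (u|m+ih)$ is affine in $i$ and $\sum_i(-1)^i\binom{l}{i}i^j = 0$ for $j<l$, this term vanishes for $l\geq 2$. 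The ``$v$-only'' term is $t^m \otimes \Omega_r^{(l,h)}v = 0$ by the choice of $l$. The surviving cross-term is $(u|m)\, A$, where
$$A = \sum_{i=0}^l (-1)^i \binom{l}{i}\bigl(t^{m+ih}\otimes e_{r-ih}v + t^{m+r-ih}\otimes e_{ih}v\bigr).$$
To prove $A \in J$, I perform the same expansion inside $M$ on $\Omega_r^{(l,h)}(t^{m+\gamma}v)$ for each $\gamma \in \rad(f)$; the analogous ``$t$-only'' and ``$v$-only'' pieces drop out for the same reasons, and since $\Omega_r^{(l,h)}$ annihilates $M$,
$$0 = \Omega_r^{(l,h)}(t^{m+\gamma}v) = (u|m+\gamma)\sum_{i=0}^l (-1)^i \binom{l}{i}\bigl(t^{m+\gamma+ih} e_{r-ih} v + t^{m+\gamma+r-ih} e_{ih} v\bigr).$$
Because $m + \gamma \neq 0$ in $\Z^d$ and $u$ is generic, $(u|m+\gamma)\neq 0$, so the bracketed sum vanishes in $M$; this is precisely the defining relation for membership of $A$ in $J$. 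Hence $\Omega_r^{(l,h)}$ kills every generator $\psi(t^m, v)$ and thus all of $\widehat{M}$.

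Finally, the generic choice of $u$ implies $(u|\alpha+n) \neq (u|\alpha+m)$ whenever $n \neq m$ in $\Z^d$, so the $W_u$-weight decomposition of $\widehat{M}$ for the one-dimensional Cartan $\C D(u,0)$ coincides with its $\mh$-weight decomposition. The solenoidal boundedness criterion from \cite{BF1, BF2}---a $W_u$-weight module annihilated by all differentiators of a fixed order $l$ is uniformly bounded---then delivers the conclusion. \textbf{The main obstacle} in this plan is the verification $A \in J$; the genericity of $u$ is essential, since otherwise the scalar $(u|m+\gamma)$ could vanish on some coset, obstructing the cancellation that produces the required relation in $M$.
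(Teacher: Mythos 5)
Your computation showing that the differentiators $\Omega_r^{(l,h)}$ annihilate the cover is correct, and it is essentially equivalent to the key identity the paper derives (its equation (3.3)); in particular your verification that the cross-term $A$ lies in $J$, using $(u\mid m+\gamma)\neq 0$ for $m\notin\rad(f)$ and $\gamma\in\rad(f)$, is sound. The gap is in the final step. There is no ``solenoidal boundedness criterion'' of the form you invoke: a $W_u$-weight module annihilated by all differentiators of a fixed order $l$ need \emph{not} be uniformly bounded --- an infinite direct sum of copies of a single bounded module killed by the differentiators is again killed by them but has infinite-dimensional weight spaces. The results of \cite{BF1, BF2} in this direction either go in the opposite direction (bounded implies annihilated, which is Lemma \ref{3.7} here) or carry an extra finiteness hypothesis; the boundedness of a cover is never obtained there as a black box but always by an explicit spanning argument.

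Concretely, the weight space $\widehat{M}_\lambda$ is spanned by the pieces $\psi(t^{n+r}, M_{\lambda-n-r})$ with $n$ running over a finite set $\Delta$ of coset representatives of $\rad(f)$ in $\Z^d$ and $r$ over all of $\rad(f)$; each piece is finite-dimensional because $M$ is bounded, but there are infinitely many values of $r$, so finite-dimensionality of $\widehat{M}_\lambda$ does not yet follow. The paper closes this by introducing the norm $\|r\|=\sum_i|\gamma_i|$ on $\rad(f)$ and using the relation $\Omega_{r}^{(l,\xi_j)}\psi(t^n,w)=0$ (with $v=e_0w$) to rewrite $\psi(t^{n+r},v)$, when some coordinate of $r$ exceeds $l/2$ in absolute value, as a combination of terms $\psi(t^{n+r'},\cdot)$ with $r'$ of strictly smaller norm or with $\|r'\|\leq ld/2$; induction on $\|r\|$ then shows that $\widehat{M}_\lambda$ is spanned by the finitely many finite-dimensional pieces with $\|r\|\leq ld/2$ (plus one extra piece when $\lambda$ lies in $\Z^d$, to handle the case $e_0$ is not invertible on $M_0$). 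Your annihilation statement already contains exactly the identity needed for this reduction, so the repair is to carry out that induction rather than to appeal to a general criterion.
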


\begin{proof} %Since $\widehat{M}$ is a module over the associative algebra $\Z(\C_q)$, we only need to
%show that  one of its weight spaces is finite dimensional.
Let $\Delta$ be a complete
coset representatives  of the subgroup $\rad(f)$ in $\Z^d$ with $0\notin\Delta$. Clearly $\Delta$ is a finite set.
For a weight $\l\in \C^d$, the weight space $\widehat{M}_\l$ is spanned by
$$ \{\psi(t^{n+r}, M_{\l-n-r}): \  n\in\Delta, r\in\rad(f)\}.$$

We introduce a norm on $\rad(f)$:
$$\|r\|=\sum_{i=1}^d|\gamma_i|,$$
where $r=\sum_{i=1}^d\gamma_i\xi_i\in\rad(f)$, $\{\xi_1,\cdots,\xi_d\}$ is the $\Z$-basis of
$\rad(f)$ defined in (\ref{2.1}).
By Lemma \ref{3.7}, there exists an integer $l>1$ such that
the differentiator $\Omega_{r}^{(l,\xi_i)}$ annihilates $M$ for all $r\in\rad(f), i\in\{1,\dots,d\}$.

Let $S$ be the subspace of $\widehat{M}$ spanned by
$$ \psi(t^{n+r}, M_{\l-n-r}), \  n\in\Delta, r\in\rad(f)\ \text{with}\ \|r\|\leq\frac{ld}{2},$$
plus $\psi(t^{n_0+r_0}, M_{0})$ if $\lambda=n_0+r_0$ for some $n_0\in\Delta, r_0\in\rad(f)$.
Clearly $S$ is finite dimensional.

\

\noindent{\bf Claim:} $\widehat{M}_\l=S$.

In order to prove this claim, we only need to check  that $t^{n+r}\otimes M_{\l-n-r}$ belongs to
$S$ for any $n\in\Delta, r\in\rad(f)$. We use induction on $\|r\|$. If $|\gamma_i|\leq \frac{l}{2}$ for all
$i\in\{1,\dots,d\}$, then the claim is trivial. On the contrary, we assume that
$|\gamma_j|> \frac{l}{2}$ for some $j$. Without loss of generality, we assume that $\gamma_j>\frac{l}{2}$. The case
$\gamma_j<-\frac{l}{2}$ follows similarly. Clearly, the norms of $r-\xi_j,\dots,r-l\xi_j$ are strictly smaller than
$\|r\|$.
For $v\in M_{\l-n-r}$ with $\l-n-r\ne0$, since $e_0 v= (u \mid \l-n-r)v$, so we write $v=e_0w$ for some $w\in M_{\l-n-r}$.

From $0=\Omega_{r}^{(l,\xi_i)}t^n w=\sum_{i=0}^l(-1)^i{l\choose i}e_{r-i\xi_j} e_{i\xi_j}t^nw$, we see that
\begin{equation*} \sum_{i=0}^l(-1)^i{l\choose i}t^{n+r-i\xi_j}e_{i\xi_j}w+\sum_{i=0}^l(-1)^i{l\choose i}e_{r-i\xi_j} t^{n+i\xi_j}w =0,\end{equation*} where we have use that fact that $(u|n)\ne0$.
Note that  $ e_{r-i\xi_j} t^{n+i\xi_j}w=(u\mid n+i \xi_j)t^{n+r}w+t^{n+i\xi_j}e_{r-i\xi_j}w$. From
$$\sum_{i=0}^l(-1)^i{l\choose i}=\sum_{i=0}^l(-1)^i i{l\choose i}=0,$$ we get that
\begin{equation*} \sum_{i=0}^l(-1)^i{l\choose i}t^{n+r-i\xi_j}e_{i\xi_j}w+\sum_{i=0}^l(-1)^i{l\choose i}t^{n+i\xi_j}e_{r-i\xi_j} w =0.\end{equation*}
Thus $$t^{n+r}v=-\sum_{i=1}^l(-1)^i{l\choose i}t^{n+r-i\xi_j}e_{i\xi_j}w-\sum_{i=0}^l(-1)^i{l\choose i}t^{n+i\xi_j}e_{r-i\xi_j} w,$$
i.e.,
\begin{equation}\label{3.3} \begin{split}\psi(t^{n+r},v)=& -\sum_{i=1}^l(-1)^i{l\choose i}\psi(t^{n+r-i\xi_j},e_{i\xi_j}w)\\
& -\sum_{k=0}^l(-1)^k{l\choose k}\psi(t^{n+k\xi_j},e_{r-k\xi_j} w).\end{split}\end{equation}
Note that $e_{i\xi_j}w\in M_{\l-n-(r-i\xi_j)}, e_{r-k\xi_j} w\in M_{\l-n-k\xi_j}$ and  $||r-i\xi_j||<\|r\|$ for any $i\in\{1,\cdots, l\}$, $\|k\xi_j\|\le\frac{ld}{2}$ for any $k\in\{0, 1,\cdots, l\}$, since $d\geq 2$.
By induction assumption the right hand side of (\ref{3.3}) belongs to $S$. Therefore the Claim is true. Hence
$\widehat{M}_\l$ is finite dimensional.
The theorem is proved.
\end{proof}

Now we are ready to prove our main theorem.

\

{\it Proof of Theorem 2.5.}
If $\C_q' M=0$, the module $M$ is an irreducible module over  $\W_d$. This case was proved in \cite{BF2} where $W$ is taken as a one dimensional $\gl_N$-module in the statement of the theorem.

Now we assume that $\C_q' M\neq0$.
By the irreducibility of $M$ and the fact that $\C_q' M$ is a submodule of $M$,  we see that $\C_q' M=M$.
Thus the homomorphism $\pi:\widehat{M}\to M$ is surjective.

From \cite{BF2} we know that each irreducible bounded weight $W_d$-module has a support of the form $\alpha+\rad(f)$ for some $\alpha\in\C^d$ (possibly $0$ may be removed from this coset).  Since $[\Z^d:\rad(f)]<\infty$, then $\widehat{M}$ has a composition series of $Z\mathcal{D}$-submodules:
$$0=\widehat{M}_0\subset \widehat{M}_1\subset \cdots \subset \widehat{M}_s=\widehat{M}.$$
 Thus each quotient $\widehat{M}_i/\widehat{M}_{i-1}$
is an irreducible $Z\mathcal{D}$-module. Let $k$ be the smallest integer such that $\pi(\widehat{M}_k)\neq 0$.
By the irreducibility of $M$, we see that  $\pi(\widehat{M}_k)=M$ and $ \pi(\widehat{M}_{k-1})=0$. Thus we have a surjective $\Der(\C_q)$-module homomorphism from $\widehat{M}_k/\widehat{M}_{k-1}$ to $M$. By Theorem 4.4 in \cite{LZ1}, we know that $\widehat{M}_k/\widehat{M}_{k-1}$ is isomorphic to
$\mathcal{V}^\alpha(V,W)$ for some
finite dimensional irreducible $\gl_d$-module $V$,  finite
dimensional $\Gamma$-graded-irreducible
 $\gl_N$-module $W$, and $\alpha\in \mathbb{C}^d$. This completes the proof.
\qed

 %\vspace{0.5cm}

\begin{center}
%\bf Acknowledgements

\end{center}

%We are grateful to the referees for giving a lot of good suggestions
%to make the paper much more readable.

% K.Z. was partially supported by NSERC and NSF of China (Grant
%10871192).

\vspace{3mm}

\noindent G.L.: School of Mathematics and Statistics, Henan University, Kaifeng 475004, China. Email:
liugenqiang@amss.ac.cn

\vspace{0.2cm} \noindent K.Z.: Department of Mathematics, Wilfrid
Laurier University, Waterloo, ON, Canada N2L 3C5,  and Department of
Mathematics, Xinyang Normal University, Xinyang, Henan,  464000,
P.R. China. \\
 Email:
kzhao@wlu.ca

\end{document}